\newtheorem{theorem}{Theorem}
\newtheorem*{thm}{Theorem}
\newtheorem{lemma}{Lemma}
\theoremstyle{definition}
\theoremstyle{remark}
\begin{document}

\title[]{Hot Spots in convex domains are \\in the tips (up to an inradius)}
\keywords{Laplacian eigenfunction, Hot spots conjecture, Extreme points.}
\subjclass[2010]{35J05, 60J65.} 
\thanks{S.S. is supported by the NSF (DMS-1763179) and the Alfred P. Sloan Foundation.}

\author[]{Stefan Steinerberger}
\address{Department of Mathematics, Yale University, New Haven, CT 06511, USA}
\email{stefan.steinerberger@yale.edu}

\begin{abstract}
Let $\Omega \subset \mathbb{R}^2$ be a bounded, convex domain and let $-\Delta \phi_1 = \mu_1 \phi_1$ be the first
nontrivial Laplacian eigenfunction with Neumann boundary conditions. The Hot Spots conjecture claims that
the maximum and minimum are attained at the boundary. We show that they are attained far away from one another: if $x_1, x_2 \in \Omega$ satisfy 
$\|x_1 - x_2\| = \mbox{diam}(\Omega)$, then every maximum and minimum is
assumed within distance $c\cdot \mbox{inrad}(\Omega)$ of $x_1$ and $x_2$, where $c$ is a universal constant (which is the optimal scaling up to the value of $c$).
\end{abstract}

\maketitle

\section{Introduction and result}

\subsection{Introduction.} Let $\Omega \subset \mathbb{R}^2$ be a convex domain and let $\phi_1$ denote the first
nontrivial Laplacian eigenfunction with Neumann boundary conditions, i.e.
\begin{align*}
-\Delta \phi_1 &= \mu_1 \phi_1 \quad \mbox{in}~ \Omega\\
\frac{\partial \phi_1}{\partial \nu} &= 0 \quad \mbox{on}~\partial \Omega
\end{align*}
Physically, this describes the long-term behavior of generic solutions of the heat equation if $\Omega$ is insulated -- physically, one would expect that the maximum and the minimum are at the boundary, this is the Hot Spots conjecture of J. Rauch.

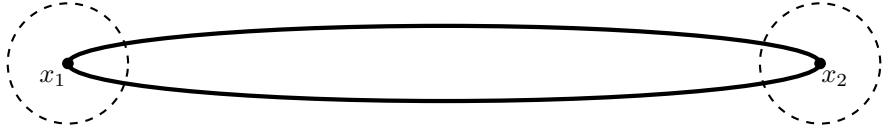
\begin{figure}[h!]
\begin{tikzpicture}
\draw[ultra thick] (0,0) ellipse (5cm and 0.5cm);
\filldraw (-5,0) circle (0.07cm);
\node at (-5.2, -0.2) {$x_1$};
\filldraw (5,0) circle (0.07cm);
\node at (5.2, -0.2) {$x_2$};
\draw [thick, dashed] (-5,0) circle (0.8cm);
\draw [thick, dashed] (5,0) circle (0.8cm);
\end{tikzpicture}
\caption{Maximum and minimum are attained close (at most a universal multiple of the inradius away) to the points achieving maximal distance (the `tips' of the domain).}
\end{figure}

However, physical intuition also tells us that the maxima and minima should not be merely at the boundary, they should be at `opposite ends' of the domain (see Fig. 1). The purpose of this paper is to prove a sharp form of this statement.
\begin{thm} There exists a universal $c>0$ such that for all bounded, convex $\Omega \subset \mathbb{R}^2$: if $x_1, x_2 \in \Omega$ are at maximal distance, $\|x_1 -x_2\| = \emph{diam}(\Omega)$, then $\phi_1$ assumes \emph{every} global maximum and minimum at distance at most $c \cdot \emph{inrad}(\Omega)$ from $\left\{x_1, x_2\right\}$.
\end{thm}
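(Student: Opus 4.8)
The statement is vacuous unless $\Omega$ is thin, so the first move is to reduce to the regime $\mathrm{inrad}(\Omega)\le\varepsilon_0\,\mathrm{diam}(\Omega)$ for a small absolute $\varepsilon_0$ (otherwise every point of $\Omega$ lies within $\varepsilon_0^{-1}\mathrm{inrad}(\Omega)$ of $x_1$, and there is nothing to prove). Write $D=\mathrm{diam}(\Omega)$, $\rho=\mathrm{inrad}(\Omega)$ and choose coordinates with $x_1=(0,0)$, $x_2=(D,0)$. Since $\Omega\subseteq B(x_1,D)\cap B(x_2,D)$, the first coordinate on $\Omega$ lies in $[0,D]$, so $\Omega=\{(s,t):0\le s\le D,\ \alpha(s)\le t\le\beta(s)\}$ with $\alpha$ convex, $\beta$ concave, and $\alpha(0)=\beta(0)=\alpha(D)=\beta(D)=0$ (the slices at $s=0,D$ are forced to be the single points $x_1,x_2$ by the diameter constraint). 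The kite on the four vertices $x_1$, $x_2$ and the two endpoints of the widest transverse slice is inscribed in $\Omega$; a kite with long diagonal $D$ and short diagonal $W$ has inradius $\ge cW$ for an absolute $c$ (indeed $\approx W/2$ when $D\gg W$), so by monotonicity of the inradius under inclusion the maximal transverse width $W:=\max_s(\beta(s)-\alpha(s))$ satisfies $W\le C\rho$. Thus $\Omega\subseteq[0,D]\times[-C\rho,C\rho]$ is a thin convex sliver, and since $\beta$ is concave with $\beta(0)=\beta(D)=0$ (symmetrically for $\alpha$) one gets the slant bound $|\alpha'(s)|,|\beta'(s)|\le W/\min(s,D-s)$, in particular bounded by an absolute constant on $[\rho,D-\rho]$.

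Next I collect a priori estimates. Using $s$ itself as a test function in the Rayleigh quotient gives $\mu_1\le 1/\mathrm{Var}_\Omega(s)$; the inscribed kite has area $\gtrsim\rho D$ (the slice through a largest inscribed disc already has width $\ge 2\rho$), hence is a fixed fraction of $\Omega$ because $|\Omega|\le DW\lesssim\rho D$, and the variance of $s$ over the kite is $\gtrsim D^2$, so $\mathrm{Var}_\Omega(s)\gtrsim D^2$ and $\mu_1\le C/D^2$. Together with the gradient bound $\|\nabla\phi_1\|_{L^\infty}\le C\sqrt{\mu_1}\,\|\phi_1\|_{L^\infty}$ available for Neumann eigenfunctions on convex domains, after normalizing $\|\phi_1\|_{L^\infty}=1$ the function $\phi_1$ is $(C/D)$-Lipschitz. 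Finally, $\mathrm{dist}(x,\partial\Omega)\le\rho$ for every $x\in\Omega$, so any global extremum has transverse coordinate $O(\rho)$ automatically; the task reduces to bounding the first coordinate $s_0$ of an extremum away from the middle of $[0,D]$.

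The heart of the argument is an effective one-dimensional reduction. Let $w=\beta-\alpha$ and let $\bar\phi(s)$ be the average of $\phi_1$ over the slice at $s$; Lipschitzness gives $\|\phi_1-\bar\phi\|_{L^\infty}\le CW/D\le C\rho/D$. Integrating $-\Delta\phi_1=\mu_1\phi_1$ over the cap $\Omega\cap\{s<a\}$ and using $\partial_\nu\phi_1=0$ on $\partial\Omega$ yields the identity
\[
\mu_1\int_{\Omega\cap\{s<a\}}\phi_1\,dx=-\int_{\{s=a\}\cap\Omega}\partial_s\phi_1\,dt,
\]
and feeding in $|\phi_1-\bar\phi|\le C\rho/D$ together with the slant bound shows that on $[\rho,D-\rho]$ the average $\bar\phi$ satisfies the Sturm--Liouville equation $-(w\bar\phi')'=\mu_1 w\bar\phi$ up to an error of size $O(\rho/D)$, with $w\bar\phi'$ of size $O(\mu_1\rho^2)$ at $s=\rho$ and $s=D-\rho$. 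Since $\phi_1$ has exactly two nodal domains (Courant) and the sliver is thin, $\bar\phi$ has essentially a single sign change $\sigma$, separating $\{\bar\phi>C\rho/D\}$ (a left interval, say) from $\{\bar\phi<-C\rho/D\}$ (a right interval), with a transition zone of $s$-length $O(\rho)$. On $[\rho,\sigma]$ the quantity $w\bar\phi'$ starts near $0$ and has a negative, essentially decreasing derivative $(w\bar\phi')'\approx-\mu_1 w\bar\phi<0$, so $\bar\phi'\le -c/D$ once $s\ge C_0\rho$; hence $\bar\phi$, and with it $\phi_1$ up to the error $O(\rho/D)$, is strictly below $1$ on $\{C_1\rho\le s\le\sigma\}$. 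This forces every global maximum to have first coordinate $\le C_1\rho$ and (by the symmetric argument on $[\sigma,D]$, or by applying everything to $-\phi_1$) every global minimum to have first coordinate $\ge D-C_1\rho$. Combined with the transverse bound, every global maximum and minimum lies within $O(\rho)$ of $\{x_1,x_2\}$.

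The main obstacle I anticipate is making the one-dimensional reduction rigorous in the degenerating tip regions $\{0\le s\le\rho\}$ and $\{D-\rho\le s\le D\}$, where the slant of $\partial\Omega$ is unbounded, $w$ need not be comparable to a linear function, and the Sturm--Liouville comparison fails; there one must argue directly, and quantitatively, that the extrema cannot lie in the bulk (rather than that they must lie at the tips), and one must track all error terms tightly enough that the final localization radius is proportional to $\rho$ and not to a larger quantity such as $\sqrt{\rho D}$. A secondary technical point is justifying that the slice-integral $s\mapsto\int_{(\{s\}\times\mathbb{R})\cap\Omega}\phi_1$ changes sign essentially once, for which convexity, thinness, and the two--nodal--domain property must be combined.
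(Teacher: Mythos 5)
Your route is genuinely different from the paper's: you reduce deterministically to a thin sliver, average over transverse fibers, and run a Sturm--Liouville comparison on $\bar\phi$, whereas the paper argues probabilistically (Feynman--Kac, uniform two-sided Gaussian bounds for the Neumann heat kernel, and a hitting-time argument comparing the fiber through $x_{\max}$ with a fiber a bounded distance to its right). The normalization, the bound $\mu_1\le C/D^2$, and the flux identity $\mu_1\int_{\{s<a\}}\phi_1=-\int_{\{s=a\}}\partial_s\phi_1$ are all fine. But there is a genuine quantitative gap at the core, and it is exactly the one you name at the end without resolving. Your only control on the transverse oscillation of $\phi_1$ is the first-order bound $\|\nabla\phi_1\|_\infty\lesssim\sqrt{\mu_1}\sim 1/D$, which bounds $\|\phi_1-\bar\phi\|$ on the fiber through the maximum only by $O(\rho/D)$. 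On the other hand, the drop your ODE produces between the fiber at $s\sim\rho$ and the fiber at $s_0$ (the abscissa of the maximum) is of order $\mu_1 s_0^2\sim s_0^2/D^2$, since $w$ is concave with $w(0)=0$, so $F(u)/w(u)\lesssim u$ near the tip. The drop beats the oscillation error only when $s_0\gtrsim\sqrt{\rho D}$. So the argument as written localizes the extrema at scale $\sqrt{\mathrm{inrad}\cdot\mathrm{diam}}$, not $\mathrm{inrad}$; this is not a matter of tracking error terms more tightly, it is a structural loss of a factor $D/\rho$.

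The missing idea is second-order flatness of $\phi_1$ near its maximum at scale $\mathrm{inrad}$: one needs $\phi_1\ge(1-C\mu_1)\,\phi_1(x_{\max})$ on the whole fiber through $x_{\max}$, i.e.\ a deficit $O(D^{-2})$ rather than $O(\rho/D)=O(D^{-1})$. This is precisely the paper's Lemma 4, described there as the bulk of the work and proved via the Gaussian heat-kernel bounds together with a coin-flipping comparison of $p_\delta(x,\cdot)$ against $p_1(x_{\max},\cdot)$. It is worth noting that the standard source for the very gradient estimate you invoke (the $P$-function maximum principle on convex planar domains) actually yields the stronger pointwise form $|\nabla\phi_1|^2\le\mu_1\bigl(\|\phi_1\|_\infty^2-\phi_1^2\bigr)$; integrating this along paths from $x_{\max}$ gives $\|\phi_1\|_\infty-\phi_1(x)\le\tfrac{\mu_1}{2}\,d_\Omega(x,x_{\max})^2$, which would supply exactly the missing ingredient and probably lets your slicing scheme close at scale $\rho$. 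As the proposal stands, however, with only the Lipschitz form of the bound, it proves a weaker localization than the stated theorem. The remaining issues you flag (single sign change of $\bar\phi$, the degenerate tip regions where the slant is unbounded) are secondary and repairable using the simplicity of $\mu_1$ and the known location of the nodal line for elongated convex domains.
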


This result may be understood as being somewhat parallel to the Hot Spots conjecture. Dilation of the domain shows that it is sharp up to the value of the constant $c$. Our proof can, in principle, produce an effective upper bound on $c$ but it is unlikely to be close to the true value. The example of the disk shows that the optimal constant has to satisfy $c \geq \sqrt{2}$. The example of an $N \times 1$ rectangle shows that, even for very eccentric convex sets, one has $c \geq 1$. 

\subsection{Related results} The Hot Spots conjecture  was  made by Rauch in 1974 in a lecture he gave at a Tulane University PDE conference \cite{b3}. Except for it being mentioned in a 1985 book of Kawohl \cite{kawohl} (who suggested that it may be false in general but might be true for convex domain), there was relatively little work until the late 1990s.
Then Ba\~nuelos \& Burdzy \cite{b3} proved it for obtuse triangles
and Burdzy \& Werner \cite{b4} obtained a counterexample for domains that are not simply connected (see also Burdzy \cite{b25} for a counterexample with one hole). Jerison \& Nadirashvili \cite{jerison} proved it for convex domains with two axes of symmetry, Pascu \cite{pascu} proved it for convex domains with one axis of symmetry. Atar \& Burdzy \cite{atar} proved it for a family of convex domains, where, essentially, the boundary is given by 1-lipschitz functions (their result is actually stronger and gives detailed information about the level sets in that case). Miyamoto \cite{miya} proved the conjecture for convex domains `close' to the ball and constructed an example \cite{miya2} showing that there might be arbitrarily many isolated hot spots on the boundary (the rectangle shows that there can be infinitely many non-isolated hot spots). The case of triangles was studied by the Polymath7 project, Judge \& Mondal \cite{judge} recently established the Hot Spots conjecture for all triangles. 
Our result is reminiscent in flavor of a result of Jerison \cite{jerison} who located the position of the nodal line (up to an inradius). We also note that the problem with Dirichlet instead of Neumann boundary conditions is fairly well understood \cite{beck, brasco, grieser2, magna, manas}. \\

Many of the results for the Neumann problem \cite{atar, b25, b3, b4, pascu} are established using probabilistic techniques which could be rewritten in purely analytic terms (`markovianity' = `semigroup property', `Brownian motion' = `heat kernel') but can be explained particularly nicely in probabilistic language; we will also give a probabilistic argument. Our argument has quite a bit of flexibility and can be used in much rougher settings (say, the Graph Laplacian on combinatorial graphs, general second-order uniformly elliptic operators on suitable manifolds, ...) but the formulation of the Theorem as well as the proof are particularly nice and transparent in the case of convex domains in the plane.

\section{Proof}
\subsection{Outline.} The proof combines two-sided Gaussian bounds for the heat kernel (cf. Grigor'yan \cite{grig}, Saloff-Cose \cite{saloff}, Sturm \cite{sturm}) with combinatorial reasoning on top of the probabilistic interpretation that inspired most of the existing results (cf. Ba\~nuelos \& Burdzy \cite{atar} and Burdzy \& Werner \cite{b4}). 
 We rotate $\Omega$ so as to minimize the projection onto the $y-$axis (see Fig. 2) and use dilation to normalize to have inradius $\mbox{inrad}(\Omega) = 1$. We can thus assume to be dealing with the case where $\Omega$ has dimensions roughly $N \times 1$, where $N \gg 1$. In particular, our result allows us to assume w.l.o.g. that $N \geq 100$ and a result of Ba\~nuelos \& Burdzy \cite[Proposition 1.1]{b3} then shows that the first eigenvalue is simple (allowing us to speak of \textit{the} first eigenfunction).
 A result of Jerison \& Grieser tells us that the nodal line is very close to a line and has width (projection onto the $x-$axis) $\leq c N^{-1}$ (we do not need this statement but it helps to paint the picture).
We give the proof only for the maximum, the case of the minimum is identical after multiplying with $(-1)$. We also assume, for simplicity of exposition, that the maximum is assumed `on the right' in the normalization of Fig. 2 (again without loss of generality). We show the following: if $\phi_1$ assumes its maximum in $(x,y) \in \Omega$,
then, for some universal constant $c$ and all $z\in \mathbb{R}$, either $(x+c, z) \notin \Omega$ or $\phi_1(x+c, z) > \phi_1(x,y)$.
The proof comes in two parts.
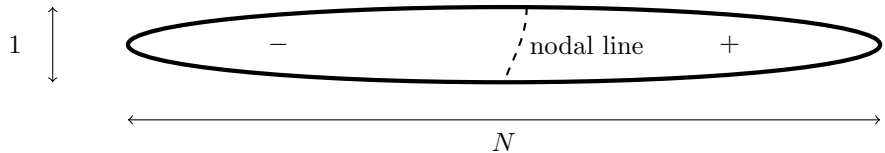
\begin{figure}[h!]
\begin{tikzpicture}
\draw[ultra thick] (0,0) ellipse (5cm and 0.5cm);
\draw [<->] (-5,-1) -- (5,-1);
\node at (0, -1.3) {$N$};
\draw [<->] (-6,-0.5) -- (-6,0.5);
\node at (-6.5, 0) {$1$};
\draw [thick, dashed] (0.3, 0.5) to[out=270,in=70] (0, -0.5);
\node at (1.1, 0) {$\mbox{nodal line}$};
\node at (3,0) {$+$};
\node at (-3,0) {$-$};
\end{tikzpicture}
\caption{The normalization of $\Omega$.}
\end{figure}

\textit{Part 1.}
 The bulk of the work goes into the following statement: if $\phi_1$ assumes its maximum in $x_{\max}$, then in a 1-neighborhood of $x_{\max}$, i.e. for any $\|x-x_{\max}\| \leq 1$,
$$\phi_1(x) \geq   e^{-c \cdot \mu_1} \phi_1(x_{\max})$$
where $c$ is a universal constant. We note that, for another universal constant $c$, we have $\mu_1 \leq c N^{-2}$, the exponential factor $e^{-c \cdot \mu_1}$ is thus very close to 1 and at least of size $1 - c N^{-2}$.
This scaling is rather unsurprising -- it can be summarized as saying that, locally, Taylor expansion is accurate. We recall that, locally in $x_{\max}$, the gradient vanishes, $\nabla \phi_1(x_{\max}) = 0$, and Taylor expansion results in
$$ \phi_1(x) = \phi_1(x_{\max}) + \left\langle D^2 \phi_1(x_{\max}) (x-x_{\max}), x-x_{\max} \right\rangle + \mbox{l.o.t.}$$
The eigenvalue satisfies the asymptotic bound $\mu_1 \leq c N^{-2},$ where $c$ is a universal constant (see \cite{grieser, jerison}). Moreover
$$ \mbox{tr} ~D^2 \phi_1 = \Delta \phi_1 = -\mu_1 \phi_1$$
and the Hessian is negativ-semidefinite in the maximum, therefore
$$ \left\langle D^2 \phi_1(x_{\max}) v,v \right\rangle \geq -\mu_1 \phi_1(x_{\max}) \|v\|^2$$
and we expect, from Taylor expansion,
$$ \phi_1(x) \geq \phi_1(x_{\max}) \left(1- \mu_1 \|x-x_{\max}\|^2\right) \geq (1-\mu_1) \phi_1(x_{\max}).$$
We show that this argument can be extended from the infinitesimal scale to scale 1. We remark that this inequality also implies a geometric version: for $\|x - x_{\max}\| \leq 1$ and a universal constant $c>0$
 $$\phi_1(x) \geq   \left(1 - c \left( \frac{\mbox{inrad}(\Omega)}{\mbox{diam}(\Omega)}\right)^2\right)\phi_1(x_{\max}).$$
 It is clear that such a lower bound cannot be true on general domains (imagine bottleneck domains with a very thin bottleneck) but it should universally be true for convex domains (and that is what we prove in $d=2$ dimensions). It is conceivable that this part of our proof can be simplified. \\

\textit{Part 2.} The second part of the proof makes use of the probabilistic interpretation
$$ \phi_1(x) = e^{\mu_1 t} \cdot \mathbb{E}(\phi_1(B_x(t))),$$
where $B_x(t)$ is a Brownian motion started in $x$ and running up to time $t$ that is being reflected on the boundary. The `averaged' case corresponds to the heat kernel but we will actually make explicit use of the internal time $t$ of a Brownian motion.
The second part of our argument is conceptually robust and might potentially be useful in establishing even stronger results or similar results in different settings. The critical bottleneck that is missing to establish, say, the hot spots conjecture seems to be an isoperimetric principle which we outline in the last section of the paper. It is clear that one of the difficulties of the hot spots conjecture is that the location of the maximum of a function is, by definition, unstable (a small perturbation, say size $\varepsilon$ in $L^{\infty}$, can move the maximum by $\gtrsim \sqrt{\varepsilon}$ and even further if the critical point is unstable) -- one nice aspect of our approach is that it is local and exploits the structure of the maximum in a very crucial way (the simplest description of the approach is the old adage that if we take an average over many numbers and the average is close to the maximum, then most numbers most be close to the maximum; indeed, our approach would not work as easily away from the maximum); we are somewhat hopeful that it can find other applications.

\subsection{A Geometric Lemma} We establish several Lemmata. The first says that it does not matter which pair of extremizing points we chose. As such, it will not be important for the main part of the proof of the Theorem but it is important to establish the internal validity of the statement.
\begin{lemma} There exists a universal constant $c>0$ such that if
$$ \| x_1 - x_2\| = \emph{diam}(\Omega) = \|x_3 - x_4\|,$$
then, up to possibly renaming the variables, $\|x_1 - x_3\| \leq c\cdot \emph{inrad}(\Omega)$.
\end{lemma}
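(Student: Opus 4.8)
The plan is to prove the Lemma by a purely convex-geometric argument whose only non-elementary ingredient is the classical comparison $\mathrm{width}(\Omega)\le 3\,\mathrm{inrad}(\Omega)$ for planar convex bodies. I would first recall a proof of this: the largest inscribed disc (of radius $r=\mathrm{inrad}(\Omega)$) touches $\partial\Omega$ either at two antipodal points, in which case the two parallel supporting lines there are at distance $2r$ and $\mathrm{width}(\Omega)\le 2r$, or at (at least) three points whose supporting lines bound a triangle $T\supseteq\Omega$ having the same inscribed disc; writing the side lengths of $T$ as $a\ge b\ge c$ and its area as $A$, the minimal width of $T$ is its shortest altitude $2A/a$, while $r=2A/(a+b+c)$, so $\mathrm{width}(\Omega)\le 2A/a = r\,(a+b+c)/a\le 3r$. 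Since also $\mathrm{width}(\Omega)\le\mathrm{diam}(\Omega)$ always, it then suffices to show $\|x_1-x_3\|\le\sqrt{2}\,\mathrm{width}(\Omega)$ after a suitable relabeling.

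For the main step, put $w=\mathrm{width}(\Omega)$ and $D=\mathrm{diam}(\Omega)$, and rotate and translate so that the minimal width is attained in the vertical direction; then $\Omega\subseteq[0,L]\times[0,w]$, where the horizontal extent $L$ satisfies $L\le D$ since it is the distance between two points of $\overline\Omega$. Let $p=(p_1,p_2)$, $q=(q_1,q_2)$ be the endpoints of \emph{any} segment realizing the diameter, labeled so that $p_1\le q_1$. The vertical component of $q-p$ has absolute value at most $w$, hence $q_1-p_1\ge\sqrt{D^2-w^2}$; combining this with $p_1\ge 0$ and $q_1\le L\le D$ yields $p_1\le D-\sqrt{D^2-w^2}=w^2/(D+\sqrt{D^2-w^2})\le w$ and $q_1\ge\sqrt{D^2-w^2}\ge D-w\ge L-w$ (the inequality $\sqrt{D^2-w^2}\ge D-w$ being equivalent to $D\ge w$). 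Thus one endpoint of every diameter segment lies in the square $Q_-=[0,w]\times[0,w]$ and the other in the square $Q_+=[L-w,L]\times[0,w]$.

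Applying this to the two segments $[x_1,x_2]$ and $[x_3,x_4]$: one of $x_1,x_2$ lies in $Q_-$ and one of $x_3,x_4$ lies in $Q_-$; renaming the variables so that these are $x_1$ and $x_3$, we get $\|x_1-x_3\|\le\mathrm{diam}(Q_-)=\sqrt{2}\,w\le 3\sqrt{2}\,\mathrm{inrad}(\Omega)$, which is the assertion with $c=3\sqrt{2}$. I expect the width–inradius comparison quoted at the outset to be the only genuine obstacle; everything after it is elementary planar trigonometry inside the bounding box, and the apparently degenerate configuration $D=w$ (where $\Omega$ is essentially round, so $\mathrm{diam}(\Omega)\le 3\,\mathrm{inrad}(\Omega)$ and the claim is immediate) needs no separate treatment, the displayed inequalities remaining valid verbatim.
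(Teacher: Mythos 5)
Your proof is correct, and it takes a genuinely different route from the paper's. The paper normalizes ($\mathrm{inrad}=1$, $\mathrm{diam}\geq 1000$ without loss of generality) and argues by contradiction: it confines $\Omega$ to the lens $B(x_1,D)\cap B(x_2,D)$ and observes that if $x_3$ were far from both tips, the triangle $x_1x_2x_3\subseteq\Omega$ would have inradius controlled by $\mathrm{inrad}(\Omega)=1$, forcing $x_3$ into a thin strip around the line $x_1x_2$; the same then holds for $x_4$, and the truncated strip has diameter strictly less than $D$, a contradiction. You instead reduce everything to the classical comparison $\mathrm{width}(\Omega)\leq 3\,\mathrm{inrad}(\Omega)$ and a bounding-box computation showing that \emph{every} diametral chord has one endpoint in each of the two corner squares $Q_-$, $Q_+$ of side $w$. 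Your route buys several things: it is fully quantitative (an explicit $c=3\sqrt{2}$, whereas the paper only asserts existence of $c$), it needs no case split on whether $\mathrm{diam}$ is large relative to $\mathrm{inrad}$ (the degenerate regime $D$ close to $w$ is absorbed verbatim, as you note), and it delivers the stronger simultaneous conclusion that after relabeling both $\|x_1-x_3\|$ and $\|x_2-x_4\|$ are at most $\sqrt{2}\,w$. The small checks all go through: $w\leq L\leq D$ since breadth in any direction is at most the diameter, $D-\sqrt{D^2-w^2}=w^2/(D+\sqrt{D^2-w^2})\leq w$, and $\sqrt{D^2-w^2}\geq D-w$ is equivalent to $D\geq w$. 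The width--inradius inequality you invoke is standard (sharp for the equilateral triangle), and your sketch of its proof via the contact points of the inscribed disc is the usual one.
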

\begin{proof}
Since we are only interested in the existence of a constant, we can assume w.l.o.g. $\mbox{diam}(\Omega) \geq 1000$ and $\mbox{inrad}(\Omega) = 1$. The points $x_1$ and $x_2$ define, since they are at maximum distance, two circular arcs that bound the possible region in which $x_3, x_4$ (along with the rest of $\Omega$) can be (see Fig. 3). We argue via contradiction: suppose $\|x_3 - x_1\| \geq 100$ and $\|x_3 - x_2\| \geq 100$.

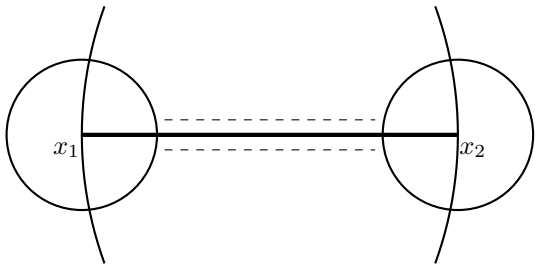
\begin{figure}[h!]
\begin{tikzpicture}
\draw[ultra thick] (0,0) -- (5,0);
\node at (-0.2, -0.2) {$x_1$};
\node at (5.2, -0.2) {$x_2$};
\draw [thick] (0,0) circle (1cm);
\draw [thick] (5,0) circle (1cm);
   \draw [thick,domain=160:200] plot ({5+5*cos(\x)}, {5*sin(\x)});
      \draw [thick,domain=-20:20] plot ({5*cos(\x)}, {5*sin(\x)});
      \draw[dashed] (1.1, 0.2) -- (3.9, 0.2);
            \draw[dashed] (1.1, -0.2) -- (3.9, -0.2);
\end{tikzpicture}
\caption{Proof of the Lemma.}
\end{figure}

We note this severely restricts where $x_3$ can be. In particular, since the triangle $x_1 x_2 x_3$ is in $\Omega$ and $\Omega$ has $\mbox{inrad}(\Omega) = 1$, this means that $x_3$ has to be very close to the line $x_1 x_2$ (in the dashed region whose height is less than 5). But then, since our consideration of $x_3$ was arbitrary, the same as to hold for $x_4$ which also has to be in the dashed region. However, the dashed region has a strictly smaller diameter, this implies $\| x_3 - x_4\| < \mbox{diam}(\Omega)$ and is a contradiction.
\end{proof}

\subsection{A Heat Kernel Estimate.} We will also use a heat kernel estimate; it is not novel but its formulation somewhat differs from the formulation commonly encountered since we require that the constants are independent of the domain.
\begin{theorem}[cf. Grigor'yan \cite{grig}, Saloff-Coste \cite{saloff2}, Sturm \cite{sturm}] There exist four universal constants $c_1, c_2, c_3, c_4 > 0$ such that for all smooth convex domains $\Omega \subset \mathbb{R}^2$ the (Neumann) heat kernel  $p_t(x,y)$ satisfies
$$ \frac{c_1}{V(x, \sqrt{t})} e^{-c_2 \frac{\|x-y\|^2}{t}} \leq p_t(x,y) \leq \frac{c_3}{V(x, \sqrt{t})} e^{-c_4 \frac{\|x-y\|^2}{t}},$$
where $V(x,r) = \left|\left\{y \in \Omega: \|x-y\| \leq r\right\}\right|$.
\end{theorem}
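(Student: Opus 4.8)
The plan is to derive the two-sided bound from the classical characterization of Li--Yau--Aronson Gaussian estimates in terms of two geometric--analytic conditions: the \emph{volume doubling property} and the \emph{scale-invariant $L^2$-Poincar\'e inequality on balls}. By the theorem of Grigor'yan and Saloff-Coste in the manifold setting, and its extension to strongly local Dirichlet forms by Sturm \cite{grig, saloff2, sturm}, if a metric measure space $(X,d,\mu)$ carrying a Dirichlet form $\mathcal{E}$ with carr\'e du champ $\Gamma$ satisfies, for constants $D_0, P_0$,
\[ \mu(B(x,2r)) \le D_0\, \mu(B(x,r)) \qquad\text{and}\qquad \inf_{a\in\mathbb{R}}\int_{B(x,r)}|f-a|^2\,d\mu \le P_0\, r^2 \int_{B(x,r)} d\Gamma(f,f) \]
for all $x\in X$ and $r>0$, then the heat kernel of $\mathcal{E}$ obeys exactly the two-sided bound in the statement, with $c_1,c_2,c_3,c_4$ depending \emph{only} on $D_0$, $P_0$ and the dimension. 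I would apply this with $X=\overline{\Omega}$, $\mu$ Lebesgue measure, $\mathcal{E}(f)=\int_\Omega|\nabla f|^2$ the Neumann form, and $d$ the intrinsic metric of $\mathcal{E}$. The one place convexity is genuinely used at this stage is that, for convex $\Omega$, geodesics are straight segments, so the intrinsic metric is simply Euclidean distance and $B(x,r)$ is the Euclidean ball intersected with $\overline\Omega$, matching the $V(x,\sqrt t)$ in the statement. It then remains to check the two conditions with \emph{universal} constants for every convex $\Omega\subset\mathbb{R}^2$.

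For volume doubling, fix $x\in\overline\Omega$, $r>0$ and consider the contraction $\Phi(y)=x+\tfrac12(y-x)$. Convexity of $\Omega$ (with $x\in\overline\Omega$) gives $\Phi(\Omega)\subseteq\Omega$, while $\Phi(B(x,2r))=B(x,r)$; hence $\Phi$ maps $\Omega\cap B(x,2r)$ into $\Omega\cap B(x,r)$ and, being a contraction by the factor $\tfrac12$ in $\mathbb{R}^2$, it scales area by $\tfrac14$. Therefore $V(x,r)\ge \tfrac14 V(x,2r)$, so $D_0=4$ works uniformly.

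For the Poincar\'e inequality, observe that $B(x,r)\cap\Omega$ is an intersection of convex sets, hence convex, with diameter at most $2r$. The Payne--Weinberger inequality states that a convex body $K\subset\mathbb{R}^n$ of diameter $D$ satisfies $\int_K|f-\overline f_K|^2 \le (D^2/\pi^2)\int_K|\nabla f|^2$. Applying this with $K=B(x,r)\cap\Omega$ gives the required estimate with $P_0=4/\pi^2$, uniformly over all convex domains and all $x,r$ (and in fact dimension-free), with enlargement factor $1$ on the right-hand ball.

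With both conditions verified with universal constants, the cited equivalence theorem produces the asserted two-sided Gaussian bound with $c_1,c_2,c_3,c_4$ depending only on $D_0=4$ and $P_0=4/\pi^2$, hence universal. The only subtlety — and exactly the point the remark before the statement flags — is the \emph{uniformity}: one must invoke the version of the Grigor'yan--Saloff-Coste--Sturm theorem in which the output constants are controlled purely by the doubling and Poincar\'e constants, with no hidden dependence on diameter, volume, or boundary regularity of $\Omega$. This is precisely what the parabolic Harnack / Dirichlet-form formulation provides, and is where I expect the main care (rather than any real difficulty) to lie. Smoothness of $\partial\Omega$ is needed only to ensure the Neumann semigroup has a jointly continuous kernel to begin with; the bounds themselves pass to arbitrary convex domains by approximation.
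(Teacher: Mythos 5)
Your proposal follows exactly the route the paper takes: reduce the two-sided Gaussian bound to the Grigor'yan--Saloff-Coste--Sturm equivalence with parabolic Harnack, and verify volume doubling and the scale-invariant Poincar\'e inequality with universal constants for convex planar domains (the paper cites Payne--Weinberger for the latter and calls the former ``easy to see''; your contraction argument and explicit constants $D_0=4$, $P_0=4/\pi^2$ simply fill in those details). The approach and the key points of care --- uniformity of the output constants in the doubling and Poincar\'e constants --- match the paper's discussion, so the proposal is correct and essentially identical in method.
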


This result does not seem to be stated in the literature as such; it seems customary to explicitly fix the domain $\Omega$ and then allow the constants to depend on the domain (in which case, the result for convex $\Omega \subset \mathbb{R}^d$ is, for example, mentioned in \cite[Section 3.3]{saloff}). However, the result is also not new: seminal work of Grigor'yan \cite{grig}, Saloff-Coste \cite{saloff2} and Sturm \cite{sturm} implies, at a very broad level of generality, an equivalence between the parabolic Harnack inequality, the two-sided Gaussian bounds for the heat kernel that we seek, and the conjunction of
\begin{enumerate}
\item the volume doubling property $V(x,2r) \leq D V(x,r)$ and
\item the Poincar\'{e} inequality 
$$ \int_{B(x,r)}|f-f_B|^2 dx \leq P r^2 \int_{B(x,r)} |\nabla f|^2 dx,$$
where $f_B$ is the mean value of $f$ over $B(x,r)$.
\end{enumerate}
It is easy to see that the volume doubling property is satisfied with a uniform constant $D$ for convex domains in $\mathbb{R}^2$. As for the Poincar\'{e} inequality, there are uniform estimates for convex domains (and thus the constant $P$) that imply the desired inequality (see e.g. \cite{payne, stein}). We also refer to the nice presentations by Bernicot, Coulhon \& Frey \cite{bern} and in the book of Zhang \cite{zhang}. An interesting aspect of our argument is that we do not actually need very precise estimates because we work at time scale $\sim 1$ and can compensate for universal constants: in particular, \textit{any} $c_1, c_2, c_3, c_4 > 0$ in Theorem 1 would suffice for our purpose.

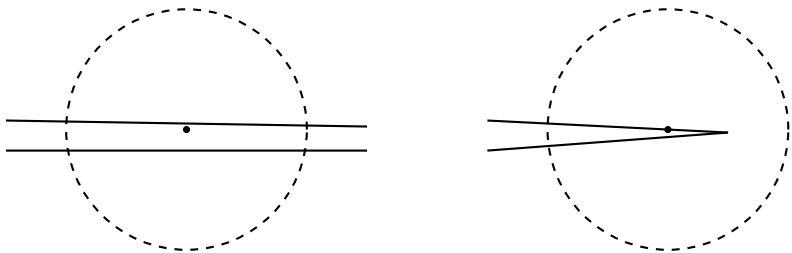
\begin{figure}[h!]
\begin{center}
\begin{tikzpicture}[scale=0.8]
\draw [thick] (0,0) -- (6,0);
\draw [thick] (0,0.5) -- (6,0.4);
\filldraw (3, 0.35) circle (0.05cm);
\draw [dashed, thick] (3, 0.35) circle (2cm);
\draw [thick] (8,0) -- (12,0.3);
\draw [thick] (8,0.5) -- (12,0.3);
\filldraw (11, 0.35) circle (0.05cm);
\draw [dashed, thick] (11, 0.35) circle (2cm);
\end{tikzpicture}
\end{center}
\caption{The two cases in the proof of Lemma 2.}
\end{figure}

We conclude this with another uniform estimate for convex domains.
\begin{lemma} For any $\delta > 0$, there exists universal $c_{1,\delta}, c_{2,\delta}$ such that for all bounded, convex domains $\Omega \subset \mathbb{R}^2$ of the type under consideration (of diameter $\emph{diam}(\Omega) \gg 1 = \emph{inrad}(\Omega)$) and all $\|x - y\|\leq1$
$$  c_{1,\delta} V(y,\delta) \leq V(x,1) \leq c_{2,\delta} V(y,\delta).$$
\end{lemma}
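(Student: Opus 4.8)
The plan is to compare all the relevant volumes to the area of a fixed-size half-disk, exploiting two facts about convex domains of the type under consideration: (i) near any interior point the domain looks, on scale $\lesssim 1$, either like a slab of width comparable to $1$ or like a full disk, because $\mathrm{inrad}(\Omega)=1$ caps the width from above and convexity plus the lower bound $\mathrm{diam}(\Omega)\gg 1$ control it from below; and (ii) the function $r\mapsto V(z,r)$ is, up to universal constants, comparable to $r^2$ for $r\le 1$ uniformly over $z\in\Omega$ and over such $\Omega$. Granting (ii), the chain $\|x-y\|\le 1$, $V(x,1)\asymp 1\asymp V(y,1)$, and $V(y,\delta)\asymp \delta^2$ gives both inequalities with $c_{1,\delta}\asymp \delta^{-2}$ is false in the wrong direction, so the real content is establishing the two-sided bound $V(z,r)\asymp r^2$ uniformly, with the constants depending on $\delta$ only through $r=\delta$ on one side.

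First I would prove the upper bound $V(z,r)\le \pi r^2$ trivially, since $V(z,r)$ is the area of a subset of a disk of radius $r$. For the lower bound I would argue as follows. Fix an inscribed disk $D$ of radius $1$ in $\Omega$, with center $c_0$. Given $z\in\Omega$ and $0<r\le 1$, consider the convex hull of $z$ and a sub-disk of $D$; because $\Omega$ is convex it contains this hull, and an elementary computation shows the hull contains a disk of radius $\gtrsim r$ around a point at distance $\lesssim 1$ from $z$ — intuitively, $z$ "sees" the inscribed ball, and the cone joining $z$ to it has aperture bounded below since the base has radius $1$ and the apex is at distance $\le \mathrm{diam}(\Omega)$; but we only need the portion within distance $r$ of $z$, and there the cone has width $\gtrsim r$. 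More carefully: let $w$ be the point on the segment from $z$ to $c_0$ at distance $\min(r/2,\|z-c_0\|)$ from $z$; then $B(w, \kappa r)\subset \Omega$ for a universal $\kappa>0$ (by convexity, interpolating between $\{z\}$ and $D$), and $B(w,\kappa r)\subset B(z, r)$, giving $V(z,r)\ge \pi \kappa^2 r^2$. I would present this as the two cases drawn in Figure 4: either $z$ is already deep inside (distance $\ge r$ from $\partial\Omega$), in which case $B(z,r)\subset\Omega$ and $V(z,r)=\pi r^2$; or $z$ is near the boundary, in which case local convexity confines $\Omega\cap B(z,r)$ to one side of a supporting line but the inscribed ball forces a definite opening on that side.

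With $V(z,r)\asymp r^2$ for all $z\in\Omega$ and $r\le 1$ in hand (constants universal, independent of $\Omega$ in the stated class), the Lemma follows: $V(x,1)\ge \pi\kappa^2$ and $V(x,1)\le \pi$, while $V(y,\delta)\ge \pi\kappa^2\delta^2$ and $V(y,\delta)\le \pi\delta^2$, so one may take $c_{1,\delta} = \kappa^2$ and $c_{2,\delta}=\kappa^{-2}\delta^{-2}$, say. The hypothesis $\|x-y\|\le 1$ is not even needed for this weak form, but it is harmless to keep it; if a version with constants closer to $1$ were wanted one would use it together with $V(y,\delta)\le V(x,1+\delta)\le \pi(1+\delta)^2$ and a matching lower bound from the cone construction centered to include both $x$ and $y$.

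The main obstacle is the uniform lower bound $V(z,r)\ge c r^2$ with $c$ independent of the domain: one must rule out the possibility that $\Omega$ pinches to a cusp near $z$, and this is exactly where convexity combined with $\mathrm{inrad}(\Omega)=1$ does the work — a convex domain containing a unit disk cannot have a cusp, and the cone from $z$ to that disk provides the needed room. Making the constant $\kappa$ in "$B(w,\kappa r)\subset\Omega$" explicit is a routine but slightly fiddly planar-geometry estimate (it degrades as $z$ recedes to the far tip, but only down to a universal floor because $\|z-c_0\|\le \mathrm{diam}(\Omega)$ enters only through the ratio $r/\|z-c_0\|\le 1$, and the relevant width at distance $r$ from the apex is $\gtrsim r$ regardless).
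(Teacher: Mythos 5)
Your proposal hinges on the uniform two-sided bound $V(z,r)\asymp r^2$ for all $z\in\Omega$ and $r\le 1$, but the lower bound $V(z,r)\gtrsim r^2$ with a universal constant is \emph{false} for the class of domains in question, and this failure is precisely the difficulty the lemma is designed to get around. Take $\Omega$ to be the convex hull of the unit disk $D=B(c_0,1)$ and a point $p$ at distance $N\gg1$ from $c_0$, and let $z$ lie on the segment from $c_0$ to $p$ at distance $1$ from the tip $p$. The cone with apex near $z$ over $D$ has half-aperture $\sim 1/N$, not a universal constant: at distance $r$ from the apex its width is $\sim r/N$, so the largest disk centred near $z$ and contained in $\Omega$ has radius $\sim 1/N$, and $V(z,1)\sim 1/N\to 0$. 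Your step ``$B(w,\kappa r)\subset\Omega$ for a universal $\kappa$'' and the closing remark that the aperture degrades ``only down to a universal floor'' are exactly where this breaks: the aperture is bounded below only by $\mathrm{inrad}(\Omega)/\mathrm{diam}(\Omega)$, which is not universal over the stated class. The same example refutes your assertion that the hypothesis $\|x-y\|\le1$ is not needed: with $x$ near the tip (so $V(x,1)\sim 1/N$) and $y$ at the centre of $D$ (so $V(y,\delta)\sim\delta^2$), no universal $c_{1,\delta}$ can satisfy $c_{1,\delta}\,\delta^2\le 1/N$ for all $N$.

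The paper's proof accordingly makes no absolute claim about either volume; it compares the two directly. If $V(x,1)=\varepsilon$ is small, then near $x$ --- and hence also near $y$, since $\|x-y\|\le1$ --- the convex domain is sandwiched between two parallel lines at distance $\sim\varepsilon$, which forces $V(x,1)\sim\varepsilon$ while $V(y,\delta)\gtrsim\delta\min(\delta,\varepsilon)$; the ratio $V(x,1)/V(y,\delta)$ therefore stays between two constants depending only on $\delta$ even as $\varepsilon\to0$. To repair your argument you would need to replace the absolute bound $V(z,r)\gtrsim r^2$ by a relative statement of this kind, using the proximity of $x$ and $y$ to show that their local widths (and hence the volumes, once the different scales $1$ and $\delta$ are accounted for) are comparable. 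Your trivial upper bound $V(z,r)\le\pi r^2$ is of course fine and is also implicitly used in the paper.
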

\begin{proof} We normalize our domain to have $\mbox{inrad}(\Omega)=1$ and $\mbox{diam}(\Omega) = N$. Then, for a typical point $x$, we expect $V(x,1) \sim 1$ and $V(y, \delta) \sim \delta^2$ and we obtain the desired estimate. The only way the estimate can fail is if one term is a lot smaller than we expect it to be.
 We start with the case $V(x,1) \leq \varepsilon \leq \mbox{inrad}(\Omega)/1000$. We distinguish two cases: one where the ball of radius 1 around $x$ contains a connected segment of the boundary and one where it contains a disconnected part of the segment (see Fig. 4). In both cases, we can see that the domain $\Omega$ around $x$ can be sandwiched by two lines at distance $\sim \varepsilon$. 
We see, in both cases, that if $V(x,1) = \varepsilon$, then $V(y,\delta) \gtrsim \delta \varepsilon$ (if $\delta \gtrsim \varepsilon$) and $V(y, \delta) \gtrsim \delta^2$ (if $\delta \ll \varepsilon$) and we establish the desired result in both cases. The other case, $V(y,\delta) \leq \varepsilon \delta^2$ is completely analogous.
\end{proof}

With these two ingredients in place, we can obtain one final estimate that will be used in the proof of the result.

\begin{lemma} There exists a universal $\delta_0 > 0$ such that for all $0 < \delta \leq \delta_0$ there exists a constant $ 1 < c_{\delta} < \infty$ depending only on $\delta$ such that for all bounded, convex $\Omega \subset \mathbb{R}^2$ and all $x,y \in \Omega$ with $\|x-y\| \leq 1$
$$ \forall~z \in \Omega: p_{\delta}(x, z) \leq c_{\delta} \cdot p_{1}(y, z).$$
\end{lemma}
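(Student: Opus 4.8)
The plan is to play the Gaussian \emph{upper} bound for $p_\delta(x,\cdot)$ off against the Gaussian \emph{lower} bound for $p_1(y,\cdot)$ supplied by Theorem 1, and then to absorb the two resulting discrepancies — a ratio of volumes and an exponential factor — into the constant $c_\delta$, using Lemma 2 for the first and the triangle inequality for the second. Fix $z \in \Omega$. Applying Theorem 1 with $t = \delta$ to $p_\delta(x,z)$ and with $t = 1$ to $p_1(y,z)$, and dividing (the heat kernel is strictly positive on the connected set $\Omega$), gives
\[
\frac{p_\delta(x,z)}{p_1(y,z)} \;\le\; \frac{c_3}{c_1}\cdot \frac{V(y,1)}{V(x,\sqrt\delta)}\cdot \exp\!\left(c_2\|y-z\|^2 - \frac{c_4}{\delta}\,\|x-z\|^2\right).
\]
It then remains to bound the volume ratio by a quantity depending only on $\delta$, and the exponential by a universal constant once $\delta$ is small.

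For the volume ratio, note that $\sqrt{\delta}\le 1$ and $\|x-y\|\le 1$, so Lemma 2 applies with parameter $\sqrt{\delta}$; interchanging the roles of $x$ and $y$ there yields $V(y,1)\le c_{2,\sqrt\delta}\,V(x,\sqrt\delta)$, hence $V(y,1)/V(x,\sqrt\delta)\le c_{2,\sqrt\delta}$, a constant depending only on $\delta$. (One can also argue directly from the uniform volume-doubling property $V(x,2r)\le D\,V(x,r)$ of planar convex bodies: since $B(y,1)\subseteq B(x,2)$ one gets $V(y,1)\le V(x,2)\le D^{k}V(x,\sqrt\delta)$ with $k=\lceil\log_2(2/\sqrt\delta)\rceil$. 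It is precisely here that convexity enters.)

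For the exponential factor, the triangle inequality gives $\|y-z\|\le 1+\|x-z\|$, hence $\|y-z\|^2\le 2+2\|x-z\|^2$ (using $2\|x-z\|\le 1+\|x-z\|^2$), and therefore
\[
c_2\|y-z\|^2 - \frac{c_4}{\delta}\,\|x-z\|^2 \;\le\; 2c_2 + \left(2c_2 - \frac{c_4}{\delta}\right)\|x-z\|^2 .
\]
Choosing $\delta_0 := c_4/(2c_2)$ (a universal quantity), the coefficient of $\|x-z\|^2$ is nonpositive for every $\delta\le\delta_0$, so the exponent is at most $2c_2$ and the exponential factor is at most $e^{2c_2}$. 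Combining the two estimates,
\[
\frac{p_\delta(x,z)}{p_1(y,z)} \;\le\; \frac{c_3}{c_1}\,c_{2,\sqrt\delta}\,e^{2c_2} ,
\]
and since $z\in\Omega$ was arbitrary this proves the lemma with $c_\delta := \max\{\,2,\ (c_3/c_1)\,c_{2,\sqrt\delta}\,e^{2c_2}\,\}$, the $\max$ with $2$ being inserted only to guarantee $c_\delta>1$.

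The two genuinely non-trivial inputs — the domain-uniform two-sided Gaussian bounds and the domain-uniform volume comparison — have already been packaged into Theorem 1 and Lemma 2, so what is left is essentially bookkeeping; the one point that must be watched is that $\delta_0$ and $c_\delta$ depend only on $\delta$ and on universal constants, never on $\Omega$, which is what forces $\delta_0$ to be taken in terms of the ratio $c_4/c_2$ of the constants of Theorem 1. I do not anticipate any genuine obstacle beyond this.
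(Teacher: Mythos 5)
Your proposal is correct and follows essentially the same route as the paper: upper-bound $p_\delta(x,\cdot)$ and lower-bound $p_1(y,\cdot)$ via Theorem 1, absorb the volume ratio using Lemma 2, and choose $\delta_0\sim c_4/c_2$ so that the faster Gaussian decay of the short-time kernel dominates. Your handling of the exponent via the triangle inequality is in fact a slightly cleaner, case-free version of the paper's split into $\|x-z\|\geq 4$ and the complementary region.
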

\begin{proof}
This follows immediately from Theorem 1 and Lemma 2: we have
$$ p_{\delta}(x, z) \leq  \frac{c_3}{V(x, \sqrt{\delta})} e^{-c_4 \frac{\|x-z\|^2}{\delta}}$$
and 
$$  \frac{c_1}{V(y, 1)} e^{-c_2 \|z-y\|^2} \leq p_1(y,z).$$
By choosing $\delta$ such that $c_4/\delta \geq 2c_2$, we can force faster exponentially decay of $p_{\delta}$ for $z$ far away from $\left\{x,y\right\}$: more precisely, a simple computation shows that then
$$  e^{-c_4 \frac{\|x-z\|^2}{\delta}} \leq  e^{-c_2 \|z-y\|^2}$$
as soon as $\|x-z\| \geq 4$. 
 However, all remaining terms are bounded by universal constants (this is the content of Lemma 2) can thus be controlled by a universal constant $c_{\delta}$.
\end{proof}
We quickly explain the idea behind the estimate: we like to think of heat kernels as localized Gaussians and of $x$ and $y$ as neighboring points (at distance $1 \ll \mbox{diam}(\Omega)$). One of the heat kernels is fixed at time $t=1$. By considering the heat kernel at the other point for shorter time $\delta_0 < 1$, we obtain a more localized heat kernel that has faster decay far away (but also has a larger $L^{\infty}-$norm). By making the constant $c_{\delta}$ sufficiently large, we can dominate the larger localized bump by the wider Gaussian locally; the more localized bump comes with faster decay and is thus automatically controlled far away. The Lemma states that this can be done in a uniform manner.

\subsection{Probabilistic ingredients.}
The Feynman-Kac formula then allows us to interpret $\phi_1$ as being connected to Brownian motion. More precisely, let $B_x(t)$ denote a Brownian motion started in $x$ and running up to time $t$ (being reflected on the boundary of $\Omega$), then 
$$ \mathbb{E} \left(\phi_1( B_x(t))\right) = e^{-\mu_1 t} \phi_1(x).$$
We will start by using the probabilistic interpretation to prove our key Lemma: it says that if the maximum is assumed in a point $x_{\max}$, then the function is very close to the maximum in a $1-$neighborhood by which we mean
$$ \phi_1\big|_{\mbox{\tiny 1-neighborhood of}~x_{\max}} \geq  \left(1 - c \left( \frac{\mbox{inrad}(\Omega)}{\mbox{diam}(\Omega)}\right)^2\right)  \|\phi_1\|_{L^{\infty}(\Omega)}$$
for some universal constant $c$. Indeed, we will actually prove a slightly stronger statement (which implies this particular geometric statement via $\mu_1 \leq c N^{-2}$, see \cite{grieser, jerison}, for some universal constant $c$).

\begin{lemma} For $N$ sufficiently large (depending only on universal constants), let $\Omega$ be a convex domain of dimensions roughly $N \times 1$ (in the sense above). Assume $\phi_1$ assumes its maximum in $x_{\max}$. Then, for some universal $c$, and all $\|x-x_{\max}\| \leq 1$,
$$ \phi_1(x) \geq e^{-c \cdot \mu_1} \phi_1(x_{\max}) .$$
\end{lemma}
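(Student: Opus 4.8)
The plan is to run the Feynman--Kac / Brownian motion argument at a fixed short time $\delta$ and exploit the fact that an average near the maximum forces pointwise proximity to the maximum. Write $M = \phi_1(x_{\max}) = \|\phi_1\|_{L^\infty}$ and, after adding a constant if necessary (which does not affect the argument since we only use it locally), think of $\phi_1$ as normalized so that its oscillation is comparable to $M$; more honestly, I would keep $\phi_1$ as is and work with the identity $\mathbb{E}(\phi_1(B_{x_{\max}}(\delta))) = e^{-\mu_1 \delta} M$. Since $\phi_1 \le M$ everywhere, the random variable $M - \phi_1(B_{x_{\max}}(\delta)) \ge 0$ has expectation $(1 - e^{-\mu_1 \delta}) M \le \mu_1 \delta \, M$, which is tiny because $\mu_1 \le cN^{-2}$. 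So in the measure $p_\delta(x_{\max}, \cdot)\,dz$ the function $\phi_1$ is, on average, within $\mu_1 \delta M$ of its maximum.

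Next I would transfer this from the measure $p_\delta(x_{\max},\cdot)$ to the measure $p_1(x,\cdot)$ for an arbitrary $x$ with $\|x - x_{\max}\| \le 1$. This is exactly what Lemma~3 is for: choosing $\delta \le \delta_0$, we have $p_\delta(x_{\max}, z) \le c_\delta \, p_1(x, z)$ for all $z \in \Omega$. Combining, for any such $x$,
\begin{align*}
M - \mathbb{E}(\phi_1(B_x(1))) &= \int_\Omega (M - \phi_1(z))\, p_1(x,z)\, dz \\
&\ge \frac{1}{c_\delta} \int_\Omega (M - \phi_1(z))\, p_\delta(x_{\max},z)\, dz = \frac{1}{c_\delta}(1 - e^{-\mu_1\delta}) M,
\end{align*}
where the inequality uses $M - \phi_1(z) \ge 0$ and $p_\delta(x_{\max},z)\le c_\delta p_1(x,z)$. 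On the other hand, by Feynman--Kac at time $1$ applied at $x$, $\mathbb{E}(\phi_1(B_x(1))) = e^{-\mu_1} \phi_1(x)$, so
\begin{align*}
e^{-\mu_1}\phi_1(x) \ge M - \frac{1}{c_\delta}(1-e^{-\mu_1\delta}) M \ge \left(1 - \frac{\mu_1 \delta}{c_\delta}\right) M,
\end{align*}
hence $\phi_1(x) \ge e^{\mu_1}\left(1 - \tfrac{\mu_1\delta}{c_\delta}\right) M \ge (1 - C\mu_1) M \ge e^{-c\mu_1} M$ for a universal $c$, using $1 - t \ge e^{-2t}$ for small $t$ and that $\mu_1$ is small. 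That gives the claimed bound.

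The main obstacle I anticipate is making the transfer step genuinely uniform in the domain --- i.e. ensuring $\delta_0$ and $c_\delta$ truly depend on nothing but universal constants, not on $N$ or the shape of $\Omega$. This is where Theorem~1 (uniform two-sided Gaussian bounds, with the volume function $V(x,\sqrt t)$ doing the bookkeeping) and Lemma~2 (comparability of $V(x,1)$ and $V(y,\delta)$ for nearby $x,y$) are essential: near the boundary of a thin convex domain $V(x,\sqrt t)$ can be much smaller than $\pi t$, and one must check the inequality $p_\delta(x_{\max},z) \le c_\delta p_1(x,z)$ still holds there. Lemma~3 already packages this, so the remaining care is just (a) verifying that $x_{\max}$ and our arbitrary $x$ are at distance $\le 1$ so Lemma~3 applies with $y = x$, $x \leftarrow x_{\max}$, and (b) absorbing the Gaussian tails for $z$ far from $\{x_{\max}, x\}$, which Lemma~3's proof already handles via the choice $c_4/\delta \ge 2c_2$. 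A secondary point is the sign issue: the argument above only uses $\phi_1 \le M$ and never needs a lower bound on $\phi_1$, which is precisely why (as the outline remarks) the method works at the maximum but not at a generic point --- so no extra hypothesis is needed and the proof is complete once the uniform constants are in hand.
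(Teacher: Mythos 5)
Your overall strategy --- transfer the small ``deficit'' $\int_\Omega (M-\phi_1)\,p_t(\cdot,z)\,dz$ from the maximum to a nearby point via the kernel comparison of Lemma~3 --- is exactly the mechanism of the paper's proof (the paper phrases it as a biased-coin game and runs it by contradiction, but the underlying inequality is the same). However, as written your argument contains a direction error that invalidates it. You invoke Lemma~3 in the form $p_\delta(x_{\max},z)\le c_\delta\,p_1(x,z)$, i.e.\ short time at $x_{\max}$ and long time at the arbitrary point $x$. Since $M-\phi_1\ge 0$, this yields a \emph{lower} bound
$$ M - e^{-\mu_1}\phi_1(x) \;=\; \int_\Omega (M-\phi_1(z))\,p_1(x,z)\,dz \;\ge\; \frac{1}{c_\delta}\left(1-e^{-\mu_1\delta}\right)M,$$
which is an \emph{upper} bound on $\phi_1(x)$ --- the opposite of what the Lemma requires. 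Your next line, ``$e^{-\mu_1}\phi_1(x)\ge M-\frac{1}{c_\delta}(1-e^{-\mu_1\delta})M$,'' silently reverses the inequality: the chain you displayed actually gives $e^{-\mu_1}\phi_1(x)\le M-\frac{1}{c_\delta}(1-e^{-\mu_1\delta})M$.

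The fix is to swap the roles of the two points, which Lemma~3 permits since it applies to any pair at distance at most $1$: use $p_\delta(x,z)\le c_\delta\,p_1(x_{\max},z)$ (short time at $x$, long time at $x_{\max}$, which is how the paper applies it). Then
$$ M - e^{-\mu_1\delta}\phi_1(x) = \int_\Omega (M-\phi_1(z))\,p_\delta(x,z)\,dz \le c_\delta \int_\Omega (M-\phi_1(z))\,p_1(x_{\max},z)\,dz = c_\delta\left(1-e^{-\mu_1}\right)M \le c_\delta\,\mu_1 M,$$
so $\phi_1(x)\ge e^{\mu_1\delta}\left(1-c_\delta\mu_1\right)M\ge e^{-c\mu_1}M$ once $N$ is large enough that $\mu_1\le cN^{-2}$ is small. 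With that one substitution your proof is correct and is in fact a streamlined, direct version of the paper's contradiction argument; the surrounding discussion (uniformity of $\delta_0$ and $c_\delta$, the essential role of $\phi_1\le M$, why the method is specific to the maximum) is accurate.
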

\begin{proof} 
We argue by contradiction; let $C \gg 1$ be a very large constant and assume that, for some $\|x - x_{\max}\| \leq 1$,
$$ \phi_1(x) \leq e^{-C \mu_1}\phi_1(x_{\max}).$$
The probabilistic interpretation at time $t=1$ implies
\begin{align*}
  \mathbb{E}(\phi_1(B_{x}(1))) &= e^{-\mu_1} \phi_1(x) \leq \phi_1(x) \leq  e^{-C \mu_1} \phi_1(x_{\max})
    \end{align*}
This tells us something interesting: not only is $\phi_1(x)$ smaller than we expected, even a sort of local average over a neighborhood of $x$ is smaller than we expected. The same computation carried out in the maximum shows 
  $$   \mathbb{E}(\phi_1(B_{x_{\max}}(1))) = e^{-\mu_1 } \phi_1(x_{\max}).$$
There is some decay ( $e^{-\mu_1 }$) but not quite as much as in the other bound (where it is $e^{-C \mu_1}$ and $C \gg 1$).
We will now invoke Lemma 3 in the two points $x_{\max}$ and $x$. There is a universal constant $\delta$ (together with another universal constant $1 \leq c_{\delta} < \infty$) such that
$$ p_{\delta}(x, z) \leq c_{\delta} \cdot p_1(x_{\max}, z).$$
However, this gives rise to an interesting bound on $\phi_1(x)$ from above. Instead of running Brownian motion, we flip a biased coin: with likelihood $c_{\delta}^{-1}$ we run Brownian motion up to time $\delta$, with likelihood $1-c_{\delta}^{-1}$ we simply return the value $\phi_1(x_{\max})$. The expected value of this game is
$$ \mbox{expected value} = c_{\delta}^{-1} e^{-\mu_1 \delta} \phi_1(x) + (1-c_{\delta}^{-1}) \phi_1(x_{\max}).$$ 
At the same time, the inequality between the heat kernels imply that the value of this game is bigger than the expected value of simply running Brownian motion up to time $t=1$ starting in $x_{\max}$ (which is $e^{-\mu_1}\phi_1(x_{\max})$). This shows
$$  c_{\delta}^{-1} e^{-\mu_1 \delta} \phi_1(x) + (1-c_{\delta}^{-1}) \phi_1(x_{\max}) \geq e^{-\mu_1}\phi_1(x_{\max}).$$
We now bound this
\begin{align*}
e^{-\mu_1}\phi_1(x_{\max}) &\leq c_{\delta}^{-1} e^{-\mu_1 \delta} \phi_1(x) + (1-c_{\delta}^{-1}) \phi_1(x_{\max})\\
&\leq c_{\delta}^{-1} e^{-\mu_1 \delta} e^{-C \mu_1} \phi_1(x_{\max}) + (1-c_{\delta}^{-1}) \phi_1(x_{\max})\\
&\leq c_{\delta}^{-1} e^{-C \mu_1} \phi_1(x_{\max}) + (1-c_{\delta}^{-1}) \phi_1(x_{\max})\\
&= \left(1 - c_{\delta}^{-1}(1- e^{-C \mu_1}) \right) \phi_1(x_{\max}).
\end{align*}
For $N$ sufficiently large, the term $C \mu_1$ is rather close to 0 since it decays like $N^{-2}$. 
Taylor expansion shows that we can replace the exponential distribution by its Taylor expansion and we obtain
$$ \left(1 - c_{\delta}^{-1}(1- e^{-C \mu_1}) \right) \sim 1 - c_{\delta}^{-1} C \mu_1 \sim e^{-c_{\delta}^{-1} C \mu_1}$$
which then leads to a contradiction for $C \geq 10 c_{\delta}$ and $N$ sufficiently large (depending on all these universal constants).
\end{proof}

As mentioned above, it seems conceivable that our proof of Lemma 4 (which is the only Lemma that we use in the proof of the main Theorem) can be somewhat simplified so we do not have to appeal to the full strength of Gaussian estimates. One possibility could be the following: we run the heat equation in $x$ for time $t=\delta < 1$. However, there is a nontrivial chance that a Brownian motion started in $x_{\max}$ is close to $x$ at time $t=1-\delta$ and then emulates a Brownian particle started there and we inherit decay estimates. Perhaps there are also purely elliptic estimates that one could use (say, of the type commonly used in proofs of the Gaussian bounds or Harnack estimates).

\subsection{Proof of the Theorem.}
\begin{proof}[Proof of the Theorem] We start by assuming that the domain is oriented as above and that the maximum is assumed in the point $x_{\max}$  (see Fig. 5 for a sketch of what things roughly look like).

\begin{figure}[h!]
\begin{tikzpicture}
\draw[ultra thick] (0,0) -- (7,0);
\draw[ultra thick] (0,3) -- (7,3);
\node at (3.5, -0.5) {$\partial \Omega$};
\filldraw (2, 1) circle (0.06cm);
\node at (2.5, 1) {$x_{\max}$};
\node at (7.5, 3) {$\partial \Omega$};
\draw [->, thick] (0, 1.5) -- (-2, 1.5);
\node at (-1, 1) {to the nodal line};
\draw [thick] (2,0) -- (2,3);
\draw [thick] (6,0) -- (6,3);
\node at (2, -0.5) {$x_*$};
\node at (6, -0.5) {$x_*+c_2$};
\end{tikzpicture}
\caption{Two cross-sections: we assume the maximum is attained in the $x_*$ fiber and show that the values at the fiber $x_{*} + c_2$ are bigger for some universal $c_2$.}
\end{figure}
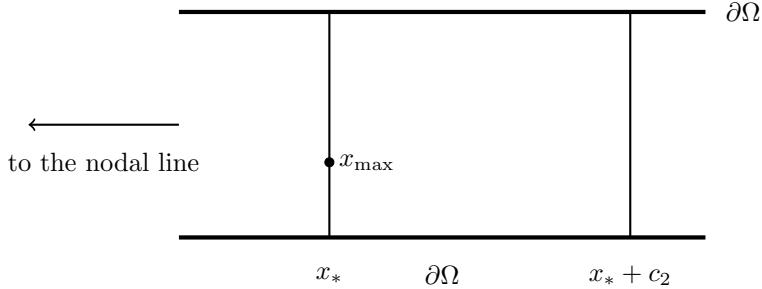

 We recall that Lemma 4 implies that for all $x$ on the fiber going through $x_{\max}$ 
$$ \phi_1(x) \geq e^{-c_1 \mu_1} \phi_1(x_{\max}),$$
where $c_1$ is a universal constant. We want to show that for $c_2$ sufficiently large, $\phi_1$ on the fiber $x_* + c_2$ is larger than it is on the entire fiber $x_*$ (indeed, we will show the strongest possible form of the statement: the smallest value assumed by $\phi_1$ on $x_* + c_2$ is larger than the largest value assumed on $x_*$); this is, of course, a contradiction.
Let now $y$ be an arbitrary point on the $x_* + c_2$ fiber. We can write
$$ \phi_1(y) = e^{\mu_1 t} \cdot \mathbb{E}(\phi_1(B_y(t)))$$
and consider now, path-wise for each such Brownian particle, the first hitting time $T$ of the fiber $x_*$. We split, for each $t$, the Brownian paths into two disjoint sets
$$ A = \left\{B_y(t)\big| ~T \leq t \right\} \qquad \mbox{and} \qquad B ~ \mbox{the complement of}~A.$$
We first show that $B$ can be neglected since, for $t$ large, it is a very small set: we observe 
\begin{align*}
e^{-\mu_1 t}\phi_1(y) &= \mathbb{E}(\phi_1(B_y(t))) \\
&= \mathbb{E}(\phi_1(B_y(t)) \big| A)\mathbb{P}(A) + \mathbb{E}(\phi_1(B_y(t)) \big| B)\mathbb{P}(B) \\
&\underbrace{\geq}_{(*)} \mathbb{E}(\phi_1(B_y(t)) \big| B)\mathbb{P}(B) \\
&\geq \mathbb{P}(B)\min_{z ~{\tiny \mbox{right of}~x_*}} \phi_1(z),
\end{align*}
where $(*)$ follows from the same argument that we will use below (and which will be explained there).
We know (see \cite{jerison}) that the nodal line is simple and thus the minimum is positive and $\mathbb{P}(B)$ is exponentially decaying in $t$ (this step uses, implicitly, the fact that the nodal line and the fiber on which the maximum is attained are not close to one another; this is known, see \cite{jerison}, to a much greater degree of precision than we require here; one could re-derive a weaker but still sufficient statement using similar arguments as the ones we employ here, see \cite{manas}).

This implies
$$ e^{-\mu_1 t}\phi_1(y) \geq  \mathbb{E}(\phi_1(B_y(t)) \big| A)\mathbb{P}(A) $$
and also shows that, for $t$ large, the inequality is not too lossy since $\mathbb{P}(A)$ converges exponentially quickly to 1 (we
will later let $t \rightarrow \infty$ and can thus absorb any constants arising from this step). For any Brownian motion in $A$, we can use Markovianity to write (abusing notation and using $+$ for concatenation of paths)
$$ B_y(t) = B_y(T) + B_{B_y(T)}(t-T).$$
This is the crucial step: it allows us to write the Brownian motion started on the fiber $x_* + c_2$ as a Brownian motion started in the fiber $x_*$ at a later time. This shows, for paths in $A$,
\begin{align*}
 \phi_1(y) &=  e^{\mu_1 t} \cdot \mathbb{E}(\phi_1(B_y(t)))\\
&= \mathbb{E} e^{\mu_1 t} \cdot \phi_1(B_y(t))\\
 & = \mathbb{E} e^{\mu_1 T} e^{\mu_1 (t-T)}  \phi_1(B_y(t))\\
 &=  \mathbb{E} e^{\mu_1 T} e^{\mu_1 (t-T)}  \phi_1(B_{B_T}(t-T))\\
 &= \mathbb{E}  e^{\mu_1 T} \phi_1(B_{B_T}(t-T))
 \end{align*}
This is where our Lemma comes into play: $\phi_1$ is uniformly large on the entire fiber $x_*$ (containing $B_T$) and thus
$$  \phi_1(B_{B_T}(t-T)) \geq e^{-c_1 \cdot \mu_1} \phi_1(x_{\max})$$
and therefore
$$  \phi_1(y) \geq   \mathbb{E} e^{\mu_1 T} e^{-c_1 \mu_1} \phi_1(x_{\max})  =  \phi_1(x_{\max}) \cdot\mathbb{E} ~e^{-c_1 \mu_1} e^{\mu_1 T}.$$
We can now obtain the desired result by showing that, for a sufficient choice of $c_2$ (the distance of the second fiber to the first fiber), we have the inequality
$$ \mathbb{E} ~ e^{\mu_1 T} > e^{c_1 \mu_1}$$
for the hitting times. This, however, is pretty easy: Brownian motion travels, on average, distance $\sim \sqrt{t}$ within $t$ units of time. This shows that by setting $c_2 \sim \sqrt{c_1}$, we obtain the desired statement (the exponential weight is in our favor: half of the Brownian motions require at least $\sim \sqrt{c_1}$ units of time, some of the longer (which is weighted heavier in the exponential distribution). This last step can be made formally precise by appealing again to the decay estimates for the heat kernel from Theorem 1.
\end{proof}

\textit{Remark.} This proof easily carries over to (much) more general settings. The Hot Spots conjecture is known to fail for manifolds (see e.g. Freitas \cite{freitas}), however, every ingredient of our proof carries over to at least a fairly large class of domains: we required a two-sided Gaussian heat kernel estimate but those are known to hold at a rather large level of generality (see \cite{grig, saloff, sturm}). We also needed the ability to partition the domain $\Omega$ by a hypersurface $\Sigma$ into two parts such that: (1) $\phi_1$ on the hypersurface $\Sigma$ is very close to its maximum value and (2) the domain $A$ contains the nodal line, in particular $\phi_1\big|_B > 0$. However, there are certainly other families of domains where these things are possible and, moreover, the argument even holds on finite combinatorial graphs equipped with the Graph Laplacian (though it is substantially harder to come up with natural conditions on the Graph, this could be potentially interesting future work).

\section{Remarks on the Hot Spots conjecture}
We conclude with a series of remarks regarding the hot spots conjecture and possible refinements of our approach. There seem to be two distinct
cases: the case $\mbox{inrad}(\Omega) \sim \mbox{diam}(\Omega)$ and $\mbox{inrad}(\Omega) \ll \mbox{diam}(\Omega)$. The first case seems to have some additional challenges (the first eigenvalue may not be simple), we restrict ourselves to the second case. A typical case that we need to exclude is then shown in Fig. 6.

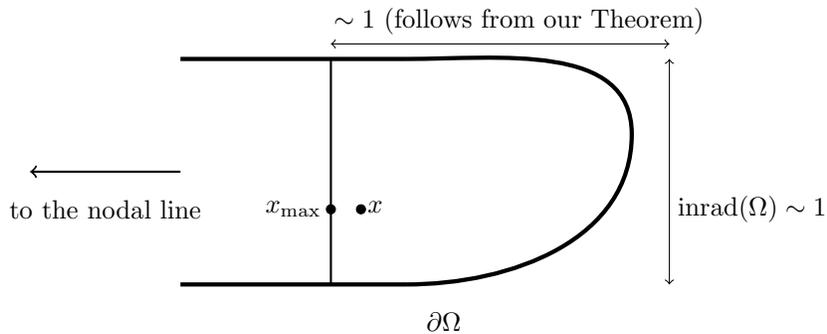
\begin{figure}[h!]
\begin{tikzpicture}
\draw[ultra thick] (0,0) -- (3,0);
\draw[ultra thick] (0,3) -- (3,3);
\draw[ultra thick] (3,3) to[out=0, in=90] (6,2) to[out=270,in=0] (3,0);
\node at (3.5, -0.5) {$\partial \Omega$};
\filldraw (2, 1) circle (0.06cm);
\node at (1.5, 1) {$x_{\max}$};
\filldraw (2.4, 1) circle (0.06cm);
\node at (2.6, 1) {$x_{}$};
\draw [->, thick] (0, 1.5) -- (-2, 1.5);
\node at (-1, 1) {to the nodal line};
\draw [thick] (2,0) -- (2,3);
\draw [<->] (6.5,0) -- (6.5,3);
\node at (7.6, 1) {$\mbox{inrad}(\Omega) \sim 1$};
\draw [<->] (2,3.2) -- (6.5,3.2);
\node at (4.5, 3.5) {$\sim 1$ (follows from our Theorem)};
\end{tikzpicture}
\caption{A sketch of the generic setting for elongated convex domains.}
\end{figure}

It is the natural to assume that, for a suitable chosen $x$ very close to $x_{\max}$ one could hope to obtain
$$ \phi_1(x) - \phi_1(x_{\max}) \geq c\|x-x_{\max}\|$$
which would then be the desired contradiction. Let us suppose $\|x-x_{\max}\| = \varepsilon$ and $\varepsilon \rightarrow 0$. Emulating our argument, we will interpret the line through $x_{\max}$ as a stopping barrier for Brownian motion and reduce the problem to studying
$$ \mbox{the size of} \qquad \mathbb{E} ~\phi_1(B_x(T)) e^{\mu_1 T}.$$
Standard estimates suggest a dichotomy: roughly $\mbox{const} \cdot \varepsilon$ of Brownian particles `escape' (in the sense of not hitting the wall but hitting the boundary $\partial \Omega$ instead, roughly $1- \mbox{const} \cdot \varepsilon$ hit the wall before hitting the boundary. The ones hitting the wall can be analyzed fairly completely, there is a fairly straightforward computation exploiting the reflection principle that is, for example, carried out in \cite{jianfeng}. However, the typical hitting time for these particles is rather small and on the scale $\varepsilon^2 \lesssim T \ll \varepsilon$, this means that the factor $e^{\mu_1 T}$ does not contribute very much. However, there are also other particles, the ones hitting the boundary first. While their portion is small (only $\varepsilon$ of particles), their effect is hopefully larger than that because $T \sim_{x_{\max}, \Omega} 1$. 
In that case, we see that $e^{T \mu_1} \sim_{x_{\max}, \Omega} 1 + c N^{-2}$. The remaining question is now: where does the particle impact on the line? Presumably, due to convexity, it is slightly more likely to impact close to $x_{\max}$ than it is to impact far away (where $\phi_1$ would be smaller) -- a quantification of this is what we consider the missing isoperimetric ingredient. We mention an inequality of Ba\~nuelos \& Pang \cite{ban} as an example of the flavor of what such an inequality could look like. \\

\textbf{Acknowledgment.} The author is grateful to Laurent Saloff-Coste for helpful discussions.

\end{document}